\documentclass[11pt]{amsart}
\usepackage[T1]{fontenc}
\usepackage[english]{babel}
\usepackage{amscd,amsmath,amsthm,amssymb,graphics}
\usepackage{lmodern,pst-node}
\usepackage{pstcol,pst-plot,pst-3d}
\usepackage{multicol}
\usepackage{epic,eepic}
\usepackage{amsfonts,amssymb,amscd,amsmath,enumitem,verbatim}
\psset{unit=0.7cm,linewidth=0.8pt,arrowsize=2.5pt 4}
% for vertex a circle with radius 0.5 mm

% for fat lines
\newpsstyle{fatline}{linewidth=1.5pt}
\newpsstyle{fyp}{fillstyle=solid,fillcolor=verylight}
\definecolor{verylight}{gray}{0.97}
\definecolor{light}{gray}{0.9}
\definecolor{medium}{gray}{0.85}
\definecolor{dark}{gray}{0.6}
%    Absolute value notation

%    Blank box placeholder for figures (to avoid requiring any
%    particular graphics capabilities for printing this document).

\unitlength=0.7cm

%------    GENERAL MACROS    -----
%
% Standard rings and fields, affine and projective space
%
               % the font for N,Z,Q,R,C

%
%------------------------------------------------
% Symbols in "Fraktur"
%
\def\frk{\frak}               % font for "Fraktur"

\def\Phi{{\frk n}}
\def\Phi{{\frk N}}
%
%------------------------------------------------

% Small letters in bold
%

%
\def\opn#1#2{\def#1{\operatorname{#2}}} % to make operators
%------------------------------------------------
% Numerical invariants of rings, ideals, and modules
%
\opn\chara{char} \opn\length{\ell} \opn\pd{pd} \opn\rk{rk}
\opn\projdim{proj\,dim} \opn\injdim{inj\,dim} \opn\rank{rank}
\opn\depth{depth} \opn\grade{grade} \opn\height{height}
\opn\embdim{emb\,dim} \opn\codim{codim}

\opn\Tr{Tr} \opn\bigrank{big\,rank}
\opn\superheight{superheight}\opn\lcm{lcm}
\opn\trdeg{tr\,deg}%\emph{
\opn\reg{reg} \opn\lreg{lreg} \opn\ini{in} \opn\lpd{lpd}
\opn\size{size}\opn\bigsize{bigsize}
\opn\cosize{cosize}\opn\bigcosize{bigcosize}
\opn\sdepth{sdepth}\opn\sreg{sreg}
\opn\link{link}\opn\fdepth{fdepth}
\opn\deg{deg}
\opn\max{max}
\opn\indeg{indeg}
\opn\min{min}
\opn\psln{psln}
%------------------------------------------------
% Divisors
%
\opn\div{div} \opn\Div{Div} \opn\cl{cl} \opn\Cl{Cl}
%
%------------------------------------------------

\let\epsilon\varepsilon
\let\phi=\varphi
\let\kappa=\varkappa

% Subsets of the spectrum of a ring
%
\opn\Spec{Spec} \opn\Supp{Supp} \opn\supp{supp} \opn\Sing{Sing}
\opn\Ass{Ass} \opn\Min{Min}\opn\Mon{Mon} \opn\dstab{dstab} \opn\astab{astab}
\opn\Syz{Syz}
%
%------------------------------------------------
% Standard operations on ideals and modules
%
\opn\Ann{Ann} \opn\Rad{Rad} \opn\Soc{Soc}
%
%------------------------------------------------
% Linear algebra and homology, endo- and automorphisms
%
\opn\Im{Im} \opn\Ker{Ker} \opn\Coker{Coker} \opn\Am{Am}
\opn\Hom{Hom} \opn\Tor{Tor} \opn\Ext{Ext} \opn\End{End}
\opn\Aut{Aut} \opn\id{id}

\opn\nat{nat}
\opn\pff{pf}%   \pf exists already
\opn\Pf{Pf} \opn\GL{GL} \opn\SL{SL} \opn\mod{mod} \opn\ord{ord}
\opn\Gin{Gin} \opn\Hilb{Hilb}\opn\sort{sort}
\opn\initial{init}
\opn\ende{end}
\opn\height{height}
\opn\depth{depth}
\opn\type{type}
\opn\ldim{ldim}
\opn\lk{lk}
\opn\del{del}

%
%------------------------------------------------
% Convexity
%
\opn\aff{aff} \opn\con{conv} \opn\relint{relint} \opn\st{st}
\opn\lk{lk} \opn\cn{cn} \opn\core{core} \opn\vol{vol}
\opn\link{link} \opn\star{star}\opn\lex{lex}
%------------------------------------------------
% Graded rings and Rees algebras
\opn\gr{gr}

%
%------------------------------------------------
% Polynomials and power series
%

\def\pot#1#2{#1[\kern-0.28ex[#2]\kern-0.28ex]}

%
%------------------------------------------------
% Direct and inverse limits
%
\opn\dirlim{\underrightarrow{\lim}}
\opn\inivlim{\underleftarrow{\lim}}
%\opn\inflim{\lim_{\rightarrow\infty}
%
% Names with a meaning
%

%
%------------------------------------------------
%

\def\Implies{\ifmmode\Longrightarrow \else
        \unskip${}\Longrightarrow{}$\ignorespaces\fi}
\def\implies{\ifmmode\Rightarrow \else
        \unskip${}\Rightarrow{}$\ignorespaces\fi}
\def\iff{\ifmmode\Longleftrightarrow \else
        \unskip${}\Longleftrightarrow{}$\ignorespaces\fi}

\let\:=\colon
 \theoremstyle{plain}
\newtheorem{Theorem}{Theorem}[section]
 \newtheorem{Lemma}[Theorem]{Lemma}
 
 \newtheorem{Proposition}[Theorem]{Proposition}

 \theoremstyle{definition}
 \newtheorem{Definition}[Theorem]{Definition}

 \newtheorem{Example}[Theorem]{Example}

%%%%%%%%%%%%%%%%%

%\DeclareMathOperator{\reg}{reg}

\DeclareMathOperator{\g}{\mathcal{G}}
%%%%%%%%%%%%%%%%%%%

% We like the var forms of some greek letters (as taught in German schools)
%
\let\epsilon\varepsilon
\let\kappa=\varkappa
%
%           We print on A4 paper
%
\textwidth=15cm \textheight=22cm \topmargin=0.5cm
\oddsidemargin=0.5cm \evensidemargin=0.5cm \pagestyle{plain}
%
%           The pf environment of AMSART needs a little help
%
\def\qed{\ifhmode\textqed\fi
      \ifmmode\ifinner\quad\qedsymbol\else\dispqed\fi\fi}
\def\textqed{\unskip\nobreak\penalty50
       \hskip2em\hbox{}\nobreak\hfil\qedsymbol
       \parfillskip=0pt \finalhyphendemerits=0}
\def\dispqed{\rlap{\qquad\qedsymbol}}

%
% ------    END OF GENERAL MACROS    -------
\opn\dis{dis}
\def\pnt{{\raise0.5mm\hbox{\large\bf.}}}

\opn\Lex{Lex}
%-- macro for local cohomology-----------------------------

%-- macro for a complicated condition for the extended
%-- Hochster's formula

\begin{document}

\author[Mafi, Naderi and Saremi]{ Amir Mafi, Dler Naderi and Hero Saremi}
\title{ Vertex decomposability and weakly polymatroidal ideals}

\address{Amir Mafi, Department of Mathematics, University Of Kurdistan, P.O. Box: 416, Sanandaj, Iran.}
\email{A\_Mafi@ipm.ir}
\address{Dler Naderi, Department of Mathematics, University of Kurdistan, P.O. Box: 416, Sanandaj,
Iran.}
\email{dler.naderi65@gmail.com}
\address{Hero Saremi, Department of Mathematics, Sanandaj Branch, Islamic Azad University, Sanandaj, Iran.}

\email{h-saremi@iausdj.ac.ir and hero.saremi@gmail.com}

\begin{abstract}
Let $K$ be a field and $R=K[x_1,\ldots, x_n]$ be the polynomial ring in $n$ variables over a field $K$. Let $\Delta$ be a simplicial complex on $n$ vertices and $I=I_{\Delta}$ be its Stanley-Reisner ideal.
In this paper, we show that if $I$ is a matroidal ideal then the following conditions are equivalent: $(i)$ $\Delta$ is sequentially Cohen-Macaulay; $(ii)$ $\Delta$ is shellable; $(iii)$ $\Delta$ is vertex decomposable. Also, if $I$ is a minimally generated by $u_1,\ldots,u_s$ such that $s\leq 3$ or $\supp(u_i)\cup\supp(u_j)=\{x_1,\ldots,x_n\}$ for all $i\neq j$, then $\Delta$ is vertex decomposable. Furthermore, we prove that
if $I$ is a monomial ideal of degree $2$ then $I$ is weakly polymatroidal if and only if $I$ has linear quotients if and only if $I$ is vertex splittable.
\end{abstract}

\subjclass[2010]{13C14, 13H10, 13D02}
\keywords{Vertex decomposable, weakly polymatroidal.}

\maketitle
\section*{Introduction}
Throughout this paper, we assume that $R=K[x_1,\ldots,x_n]$ is the polynomial ring in $n$ variables over the field $K$ and $I$ is a monomial ideal of $R$. We denote, as usual, by $\g(I)$ the unique minimal set of monomial generators of $I$. For each monomial $u= x_1^{a_1} \cdots x_n^{a_n}$, we define the support of $u$ to be $\supp(u) = \{ x_i | a_i > 0 \}$.

The ideal $I$ has linear quotients if there is an ordering
$u_1,\ldots, u_s$ of monomials belonging to $\g(I)$ such that for each integer $1 < j \leq s$, the colon
ideal $(u_1,\ldots, u_{j-1}) : u_j$ is generated by a subset of $\{x_1, \ldots, x_n \}$.
Ideals with linear quotients were introduced by Herzog and Takayama in \cite{HT}.
Conca and Herzog \cite{CH} proved that if a monomial ideal $I$ generated in one degree has linear quotients, then $I$ has a linear resolution.
 A monomial ideal I is called a polymatroidal ideal, if it is generated in a single degree with the exchange property that for each two elements $u, v \in \g(I)$ such that $\deg_{x_i}(u) > \deg_{x_i}(v)$ for some $i$, there exists an integer $j$ such that $\deg_{x_j} (u) < \deg_{x_j} (v)$ and $x_{j}(u/x_{i}) \in \g(I)$. The polymatroidal ideal $I$ is called matroidal if $I$ is generated by square-free monomials.
 Herzog and Takayama \cite{HT} proved that polymatroidal ideals have linear quotients and so they have linear resolution.
Herzog, Hibi and Zheng \cite{HHZ} proved that if $I$ is a monomial ideal generated in degree $2$, then $I$ has a linear resolution if and only if $I$ has linear quotients if and only if each power of $I$ has a linear resolution.

Kokubo and Hibi \cite{KH} introduced weakly polymatroidal ideals generated in the same
degree as a generalization of polymatroidal ideals.
Mohammadi and Moradi \cite{MM} extended the definition of weakly polymatroidal to ideals which are not necessarily generated
in one degree, and they are defined as follows:
A monomial ideal $I$ of $R$ is called weakly polymatroidal if for every two monomials
$v = x_{1}^{b_1} \cdots x_{n}^{b_n}<_{\lex}u = x_{1}^{a_1} \cdots x_{n}^{a_n}$ belonging to $\g(I)$ such that $a_1=b_1, \dots, a_{t-1}=b_{t-1}$ and $a_t > b_t$ for some $t$, there exists $j>t$ such that $x_{t} (v/x_{j}) \in I$. Kokubo and Hibi demonstrated that every weakly polymatroidal ideal generated in a single degree possesses linear quotients \cite{KH}. This result is further supported by Theorem 1.8 in \cite{MM}. Note that, in this definition, we consider the lexicographic monomial order $<_{\lex}$ on $R$ induced by the ordering $x_1>x_2>\ldots>x_n$ of the variables.

For a square-free monomial ideal $I$ of $R$, we may consider the simplicial complex $\Delta$ for which $I=I_{\Delta}$ is the Stanley-Reisner ideal of $\Delta$ and $K[\Delta]=R/I_{\Delta}$ is the Stanley-Reisner ring.
Eagon and Reiner \cite{ER} proved that $R/I$ is Cohen-Macaulay if and only if the Alexander dual $I^{\vee}$ has a linear resolution.
Herzog and Hibi \cite{HH1} generalized the notion of linear resolution to componentwise linearity and they proved that  $R/I$ is sequentially Cohen-Macaulay if and only if the Alexander dual $I^{\vee}$ is componentwise linear. It is known that if  $I$ has linear quotients, then $I$ is componentwise linear. Herzog, Hibi and Zheng \cite{HHZ1} proved that the simplicial complex $\Delta$ is shellable if and only if $I^{\vee}$ has linear quotients. Recently Moradi and Khosh-Ahang \cite{MK} proved that the simplicial complex $\Delta$ is vertex decomposable if and only if $I^{\vee}$ is vertex splittable and also one concludes that every vertex splittable ideal has linear quotients. So we have the following implications:
$$vertex~ decomposable \Longrightarrow shellable \Longrightarrow sequentially~Cohen-Macaulay.$$

Both implications are known to be strict.
The equivalence between the sequentially Cohen-Macaulay property, the shellable property and the vertex decomposable property have been studied for some families of graphs: bipartite graphs \cite{VV, V}, chordal graphs \cite{FV1, W}, and very well-covered graphs \cite{MMC} and Cactus graphs \cite{MKY}.

In this paper, we show that the above implications in the following cases are equivalent: $(i)$ if $I$ is a matroidal ideal $(ii)$ if $I$ is a square-free monomial ideal minimally generated by $u_1,\dots,u_s$ such that $s\leq 3$ or $\supp(u_i)\cup\supp(u_j)=\{x_1,\ldots,x_n\}$ for all $i\neq j$.
In the end we prove that if $I$ is a monomial ideal of $R$ with $\deg(I)\leq 2$, then $I$ is weakly polymatroidal if and only if $I$ has linear quotients if and only if $I$ is a vertex splittable, where $\deg(I)=\max\{\deg(u)\mid u\in\g(I)\}$. 

For any unexplained notion or terminology, we refer the reader to \cite{HH}. Several explicit examples were  performed with help of the computer algebra system Macaulay2 \cite{G}.

\section{Preliminaries}

In this section, we recall some definitions and properties that will be used in this article.
 Let $\Delta$ be a simplicial complex on the vertex set $V = \{x_1,\ldots, x_n \}$. Members of $\Delta$ are called faces of $\Delta$ and a facet of $\Delta$ is a maximal face of $\Delta$ with respect to inclusion. The dimension of a face $F$ is $\vert F \vert -1$ and the dimension of a complex $\Delta$ is the maximum of the dimensions of its facets. If all facets of $\Delta$ have the same dimension, then $\Delta$ is called {\it pure} and also $\Delta$ is called a {\it simplex} when it has a unique facet. If $\Delta$ is a simplicial complex with facets $F_1,\ldots, F_t$, we denote $\Delta$ by $ \langle F_1, \ldots, F_t \rangle $, and $\{F_1, \ldots, F_t \}$ is called the facet set of $\Delta$.

 For a given simplicial complex $\Delta$ on $V$, we define $\Delta^{\vee}$ by $\Delta^{\vee} = \{V \setminus A~ | ~A \notin \Delta \}$.
The simplicial complex $\Delta^{\vee}$ is called the {\it Alexander dual} of $\Delta$.
For every subset $F \subseteq V$, we set $x_{ F} = \prod_{x_j \in F} x_{j}$. The {\it Stanley-Reisner} ideal of $\Delta$ over $K$ is the ideal $I_{\Delta}$ of $R$ which is generated by the square-free monomials $x_F$ with $F \notin \Delta$. Let $I$ be an arbitrary square-free monomial ideal. Then there is a unique simplicial complex $\Delta$ such that $I = I_{\Delta}$. For simplicity,
we often write $I^{\vee}$ to denote the ideal $I_{\Delta^{\vee}}$, and we call it the {\it Alexander dual} of $I$.
If $I$ is a square-free monomial ideal $I = \cap_{i=1}^{t} \frak{p_{i}}$, where each of the $\frak{p_{i}}$ is a monomial prime ideal of $I$, then the ideal $I^{\vee}$ is minimally generated by monomials $u_i = \prod_{x_{j} \in \frak{p_{i}}} x_{j}$.

For the simplicial complex $\Delta$ and the face $F \in \Delta$, we can create two new simplicial complexes. The {\it deletion} of $F$ from $\Delta$ is $\del_{\Delta}(F) = \{ A \in \Delta \vert F\cap A=\emptyset\} $. The {\it link} of $F$ in $\Delta$ is $\lk_{\Delta}(F) =\{ A\in  \Delta \vert F \cap A =\emptyset, A\cup F\in\Delta \}$. If $F =\{x \}$, we write $\del_{\Delta} x$ (resp. $\lk_{\Delta}x$) instead of $\del_{\Delta}(\{x\})$ (resp. $\lk_{\Delta}(\{x\})$); see \cite{HH} for more detail informations.

Vertex decompositions were introduced in the pure case by Provan and Billera \cite{PB}, and extended to non-pure complexes by Bj\"orner and Wachs \cite{BW}.
A simplicial complex $\Delta$ is recursively defined to be {\it vertex decomposable} if it is either simplex or else has some vertex $x$ such that
\begin{enumerate}
\item
both $\del_{\Delta} x$ and $\lk_{\Delta}x$ are vertex decomposable, and
\item
there is no face of $\lk_{\Delta}x$ which is also a facet of $\del_{\Delta} x$.
\end{enumerate}
A vertex $x$ which satisfies in condition (2) is called a {\it shedding vertex}.
An ideal $I$ is called vertex decomposable if $I=I_{\Delta}$ which is $\Delta$ is vertex decomposable.

Moradi and Khosh-Ahang \cite{MK} defined the notion of vertex splittable ideal which is an algebraic analog of the vertex decomposability property of a simplicial complex and was defined as follows:
\begin{Definition}\label{D1}
A monomial ideal $I$ of $R$ is called vertex splittable if it can be obtained by the following recursive procedure:
\begin{enumerate}
\item
if $v$ is a monomial and $I=(v)$, $I = (0)$ or $I = R$, then I is a vertex splittable ideal;
\item
 if there is a variable $x\in V$ and vertex splittable ideals $I_1$ and $I_2$ of $K[V\setminus\{x\}]$ so that $I = xI_1+I_2$, $I_2 \subseteq I_1$ and $\g(I)$ is the disjoint union of $\g(xI_1)$ and $\g(I_2)$, then $I$ is a vertex splittable ideal.
\end{enumerate}
\end{Definition}
By the above notations if $I = xI_1 + I_2$ is a vertex splittable ideal, then $xI_1 + I_2$ is called a vertex splitting for $I$ and $x$ is called a splitting vertex for $I$.

A simplicial complex $\Delta$ is {\it shellable} if the facets of $\Delta$ can be ordered, say $F_1,\ldots,F_s$, such that for all $1\leq i<j\leq s$, there exists some $x\in F_j\setminus F_i$ and some $l\in\{1,2,\ldots,j-1\}$ with $F_j\setminus F_l=\{x\}$. An ideal $I$ is called shellable if $I=I_{\Delta}$ which is $\Delta$ is shellable.
Let $I$ be a monomial ideal of $R$ with $\g(I)=\{u_1,\ldots,u_r\}$. We say that $I$ has linear quotients if there is an ordering $u_1,u_2,\ldots,u_r$ such that for each $2\leq i\leq r$ the colon ideal $(u_1,\ldots,u_{i-1}):u_i$ is generated by a subset $\{x_1,\ldots,x_n\}$ (see \cite{HT}).
Note that if $I=(u_1,\ldots,u_r)$ is a monomial ideal with linear quotients, then the Castelnuovo-Mumford regularity $\reg(I)=\max\{\deg(u_i)| i=1,2,\ldots,r\}$ (see \cite[Lemma 4.1]{CH}).
Herzog, Hibi and Zheng \cite[Theorem 1.4]{HHZ1} proved that if $\Delta$ is a simplicial complex for which $I=I_{\Delta}$, then $I$ is shellable if and only if $I^{\vee}$ has linear quotients.

Stanley \cite{S} defined a graded $R$-module $M$ to be sequentially Cohen-Macaulay if there exists a finite filtration of graded $R$-modules
$0=M_0\subset M_1\subset\dots\subset M_r=M$ such that each $M_{i}/M_{i-1}$ is Cohen-Macaulay, and the Krull dimensions of the quotients are increasing:
$\dim(M_1/M_0)<\dim(M_2/M_1)<\dots<\dim(M_r/M_{r-1})$. In particular, we call the monomial ideal $I$ to be sequentially Cohen-Macaulay if $R/I$ is
sequentially Cohen-Macaulay.

For a homogeneous ideal $I$, we write $(I_i)$ to denote the ideal generated by the degree $i$ elements of $I$.  A monomial ideal $I$ is componentwise linear if $(I_i)$ has a linear resolution for all $i$ (see \cite{HH1}).

 Herzog and Hibi \cite{HH1} proved that the square-free monomial ideal $I$ is sequentially Cohen-Macaulay if and only if the Alexander dual $I^{\vee}$ is componentwise linear. Francisco and Van Tuyl \cite[Proposition 2.6]{FV} proved that if $I$ is a homogeneous ideal with linear quotients, then $I$ is a componentwise linear. Hence for a monomial ideal $I$ we have the following implications:
\[ vertex~ splittable \Longrightarrow linear~ quotients \Longrightarrow componentwise~ linear. \]
Both implications are known to be strict.

\section{ Main Results}
We start this section by the following lemma.

\begin{Lemma}\label{1}
Let $I$ be a matroidal ideal in $R$. Then $I$ is vertex splittable.
\end{Lemma}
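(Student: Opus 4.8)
The plan is to prove the statement by induction on $n+\deg(I)$, producing at each stage an explicit vertex splitting. Write $d=\deg(I)$. If $d=1$ then $I$ is generated by a subset of the variables and repeated splitting at those variables exhibits it as vertex splittable, and if $I$ is principal we are already in case (1) of Definition~\ref{D1}; so I may assume $d\geq 2$ and, after relabelling the variables (which preserves the polymatroidal property), that $x_1$ divides at least one element of $\g(I)$.

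For the inductive step I take $x=x_1$ and set $I_1=I:x_1$ and $I_2=(\,v\in\g(I):x_1\nmid v\,)$. The intended splitting is $I=x_1I_1+I_2$. The heart of the argument is to show that $I_1$ is generated in degree $d-1$, with $\g(I_1)=\{u/x_1:u\in\g(I),\ x_1\mid u\}$; equivalently, that every $x_1$-free generator $v$ of $I$ is divisible by $u/x_1$ for some $u\in\g(I)$ with $x_1\mid u$. Granting this, $\g(x_1I_1)=\{u\in\g(I):x_1\mid u\}$ (an antichain, hence a minimal generating set) and $\g(I_2)=\{v\in\g(I):x_1\nmid v\}$, so $\g(I)$ is the disjoint union of $\g(x_1I_1)$ and $\g(I_2)$ and $I=x_1I_1+I_2$; moreover $I_2\subseteq I\subseteq I:x_1=I_1$, giving the inclusion required in Definition~\ref{D1}(2).

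To establish the key divisibility I read the exchange property in the ``upward'' direction. Fixing an $x_1$-free generator $v$ and choosing $w\in\g(I)$ with $\deg_{x_1}(w)$ maximal, I apply the symmetric exchange property of the discrete polymatroid attached to $I$ \cite{HH} to the pair $(w,v)$ at the index $1$: since $\deg_{x_1}(w)>0=\deg_{x_1}(v)$ this yields an index $j$ with $\deg_{x_j}(w)<\deg_{x_j}(v)$ and $x_1(v/x_j)\in\g(I)$, and then $u=x_1(v/x_j)$ is divisible by $x_1$ with $u/x_1=v/x_j$ dividing $v$, exactly as wanted. I must also check that $I_1$ and $I_2$ are again polymatroidal so that the induction applies. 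For $I_1=I:x_1$ this is the standard fact that colon ideals of polymatroidal ideals by a variable are polymatroidal \cite{HH}. For $I_2$ it is a direct verification: if $u,v\in\g(I_2)$ and $\deg_{x_i}(u)>\deg_{x_i}(v)$, then the index $j$ produced by the exchange property of $I$ satisfies $i,j\neq 1$ (as $\deg_{x_1}(u)=\deg_{x_1}(v)=0$), so $x_j(u/x_i)$ is $x_1$-free and therefore lies in $\g(I_2)$.

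It remains to run the induction: $I_2$ lives in $K[x_2,\ldots,x_n]$, so its value of $n+\deg$ drops through the number of variables, while $I_1=I:x_1$ has degree $d-1$, so its value drops through the degree. By the induction hypothesis both are vertex splittable, and then $I=x_1I_1+I_2$ together with $I_2\subseteq I_1$ and the disjoint generator partition above satisfies all the requirements of Definition~\ref{D1}, so $I$ is vertex splittable. I expect the main obstacle to be precisely the key divisibility of the second paragraph, that is, the claim that $I:x_1$ is generated in degree $d-1$: this is the one point where the one-sided exchange property in the definition of a polymatroidal ideal does not suffice, and one must call on the symmetric exchange property of polymatroids.
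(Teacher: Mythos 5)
Your proof takes essentially the same route as the paper's: split $I$ at a variable as $I=x_1I_1+I_2$ with $I_1=I:x_1$ and $I_2$ generated by the $x_1$-free elements of $\g(I)$, then induct. The difference is in what is proved versus what is cited. The paper imports the entire decomposition --- including the polymatroidality of $I_1$ and $I_2$ and the containment $I_2\subseteq I_1$ --- from the proof of \cite[Theorem 1.1]{BH} and inducts on $n$; you prove these facts directly, identifying $\g(I:x_1)=\{u/x_1: u\in\g(I),\ x_1\mid u\}$ via the symmetric exchange theorem for discrete polymatroids and checking the exchange property for $I_2$ by hand, and accordingly you induct on $n+\deg(I)$, since your $I_1$ does not lose a variable. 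Your key observation --- that the one-sided exchange in the definition of polymatroidal modifies $w$ rather than $v$, so one genuinely needs the symmetric exchange theorem, applied to the pair $(w,v)$ at index $1$ to extract $x_1(v/x_j)\in\g(I)$, in order to see that $I:x_1$ is generated in degree $d-1$ --- is correct, and it is exactly the content the paper hides inside its citation. You also verify the disjoint-generator condition of Definition \ref{D1}, which the paper passes over in silence.

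The one genuine gap concerns the same point in both proofs, and it is worse in the paper's. Definition \ref{D1} as stated requires $I_1$ and $I_2$ to be ideals of $K[V\setminus\{x\}]$. Your $I_2$ is, but your $I_1=I:x_1$ is not whenever some generator of $I$ has $x_1$-degree at least $2$: for $I=(x_1,\ldots,x_n)^2$, which is polymatroidal, one has $I:x_1=(x_1,\ldots,x_n)$. Worse, for this ideal no splitting vertex exists at all under the literal definition, since $x_i^2\in\g(I)$ can lie neither in $\g(x_iI_1)$ (whose members have $x_i$-degree exactly one) nor in $\g(I_2)$ (whose members have $x_i$-degree zero); so, read literally, the lemma itself fails, and the decomposition the paper cites from \cite{BH} --- with both pieces polymatroidal ideals in $K[V\setminus\{x_1\}]$ --- simply does not exist for such $I$. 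Both your argument and the paper's are therefore implicitly using a relaxed reading of Definition \ref{D1} in which $I_1$ is only required to be a vertex splittable ideal of $R$ (one can check that vertex splittable still implies linear quotients under this reading, which is what is used downstream). Under that reading your proof is complete and, unlike the paper's, self-contained; and in the matroidal (square-free) case --- the only case the paper actually uses later, e.g.\ in Theorem \ref{T0} --- the issue evaporates, since then $I:x_1$ automatically lies in $K[V\setminus\{x_1\}]$ and both inductions go through.
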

\begin{proof}
We proceed by induction on $n$. If $n=1$, then $I$ is a principal ideal. Therefore, in this case, the assertion is trivial. Suppose that $n>1$.
By the proof of \cite[Theorem 1.1]{BH} we may assume $I=x_{1}I_{1}+I_{2}$ such that $I_{1}$ and $I_{2}$ are matroidal ideals in $K[V \setminus \{x_1\}]$ and $I_2 \subseteq I_1$, where $V =\{x_1,\ldots, x_n \}$.

By induction hypothesis the ideals $I_1$ and $I_2$ of $K[V \setminus \{x_1\}]$ are vertex splittable. Hence $I$ is vertex splittable ideal, as required.
\end{proof}

By the above lemma and \cite[Lemma 1.3]{HT} we get that every matroidal ideal have vertex splittable and so it has linear quotients and linear resolution.

The following example shows that the converse of the above lemma does not hold.
\begin{Example}
Let $I=(x_1x_2,x_2x_3,x_3x_4)$ be a monomial ideal of $R=K[x_1,x_2,x_3,x_4]$. Since $I=x_2(x_1,x_3)+(x_3x_4)$, it follows that $I$ is vertex splittable ideal. To see why $I$ is not matroidal ideal, note that  $\deg_{x_2}(x_1x_2)>\deg_{x_2}(x_3x_4)$. However, neither $x_3 (\frac{x_1x_2}{x_2})$ nor $x_4 (\frac{x_1x_2}{x_2})$ belongs to $I$. Hence $I$ is not a matroidal ideal.
\end{Example}

Hamaali, the first author and the third author \cite[Corollary 2.9]{HMS} proved that if $\Delta$ is a simplicial complex and $I$ is a matroidal ideal of $R$ such that $I=I_{\Delta}$, then $\Delta$ is sequentially Cohen-Macaulay if and only if $\Delta$ is shellable. In the following we show that these conditions are equivalent to vertex decomposability.
\begin{Theorem}\label{T0}
Let $\Delta$ be a simplicial complex and $I$ be a matroidal ideal of $R$ such that $I=I_{\Delta}$. Then the following conditions are equivalent:
\begin{enumerate}
\item[(i)] $\Delta$ is sequentially Cohen-Macaulay;
\item[(ii)] $\Delta$ is shellable;
\item[(iii)] $\Delta$ is vertex decomposable.
\end{enumerate}
\end{Theorem}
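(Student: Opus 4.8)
The plan is to close the loop of implications rather than reprove what is already available. For an arbitrary simplicial complex one has $\mathrm{(iii)}\Rightarrow\mathrm{(ii)}\Rightarrow\mathrm{(i)}$, as recalled in the introduction, while the equivalence $\mathrm{(i)}\Leftrightarrow\mathrm{(ii)}$ for a matroidal ideal is exactly \cite[Corollary 2.9]{HMS}. Hence everything reduces to establishing a single arrow back to vertex decomposability, and I would aim at $\mathrm{(ii)}\Rightarrow\mathrm{(iii)}$.

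First I would transport this remaining implication through Alexander duality. By \cite{MK}, condition (iii) is equivalent to the vertex splittability of $I^{\vee}$, and by \cite[Theorem 1.4]{HHZ1}, condition (ii) is equivalent to $I^{\vee}$ having linear quotients; so the goal becomes the ideal-theoretic statement that \emph{if $I^{\vee}$ has linear quotients then $I^{\vee}$ is vertex splittable}. To use the matroidal hypothesis I would make $I^{\vee}$ explicit. Since the exchange property defining a matroidal ideal is the basis-exchange axiom, the supports of the elements of $\g(I)$ are the bases of a matroid $M$; a direct check then shows that $\Delta$ is the complex of non-spanning sets of $M$, that $\Delta^{\vee}$ is the independence complex of the dual matroid, and hence that $I^{\vee}$ is generated by the circuits of that dual matroid. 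This identification is the conceptual core, since it recasts both hypotheses as properties of the circuits of a single matroid.

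Finally I would prove the upgrade ``linear quotients $\Rightarrow$ vertex splittable'' for $I^{\vee}$ by induction on $n$, in the spirit of the proof of Lemma \ref{1}. In the inductive step I would extract a splitting variable $x$ from the recursive deletion/contraction structure of the matroid and write $I^{\vee}=xI_{1}+I_{2}$ in $K[V\setminus\{x\}]$, where $I_{2}$ is the circuit ideal of a deletion and $I_{1}$ is governed by the corresponding contraction, so that both are circuit ideals of smaller matroids to which the induction hypothesis applies. The main obstacle is precisely this step: one must choose $x$ so that the containment $I_{2}\subseteq I_{1}$ and the disjointness of $\g(xI_{1})$ and $\g(I_{2})$ both hold and so that the linear-quotient ordering descends to $I_{1}$ and $I_{2}$, while the degenerate elements lying in no circuit or in every basis (loops and coloops) alter the minor structure and must be treated separately. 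Arranging a single choice of $x$ that simultaneously secures the splitting data and the inductive hypotheses is where the real work lies.
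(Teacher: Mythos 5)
Your reductions are all sound: the implications (iii) $\Rightarrow$ (ii) $\Rightarrow$ (i) are standard, the transport through Alexander duality via \cite{MK} and \cite[Theorem 1.4]{HHZ1} is correct, and your identification of $I^{\vee}$ as the circuit ideal of the dual matroid is accurate. But the proposal has a genuine gap, and you name it yourself: the entire mathematical content of the theorem sits in the inductive step ``linear quotients $\Rightarrow$ vertex splittable'' for circuit ideals, and you leave unresolved how to choose the splitting vertex $x$ so that $I_{2}\subseteq I_{1}$ holds, the generator sets $\g(xI_{1})$ and $\g(I_{2})$ are disjoint, and --- hardest of all --- the linear-quotients hypothesis descends to the two minors. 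There is no evident reason a linear-quotients ordering on the circuit ideal of a matroid restricts to orderings of the circuit ideals of its deletion and contraction, and nothing in your sketch indicates how to produce one. As it stands the proposal is a strategy whose decisive step is an acknowledged obstacle, not a proof.

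The paper sidesteps exactly this difficulty by running the induction on condition (i) rather than (ii), and by splitting $I$ itself rather than $I^{\vee}$. It writes $I=x_{1}I_{1}+I_{2}$ with $I_{1},I_{2}$ matroidal (the decomposition from the proof of \cite[Theorem 1.1]{BH}, as in Lemma \ref{1}), quotes the proof of \cite[Theorem 2.8]{HMS} to conclude that $I_{1}$ and $I_{2}$ are again sequentially Cohen--Macaulay --- this is precisely the hypothesis descent your approach lacks, and it comes for free from the earlier work --- and then dualizes $I=(x_{1},I_{2})\cap I_{1}$ to obtain $I^{\vee}=x_{1}I_{2}^{\vee}+I_{1}^{\vee}$ with $I_{1}^{\vee}\subseteq I_{2}^{\vee}$. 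Induction on $n$ applied to the matroidal ideals $I_{1}$ and $I_{2}$, which live in fewer variables, makes $I_{1}^{\vee}$ and $I_{2}^{\vee}$ vertex splittable, and the displayed decomposition is then literally a vertex splitting of $I^{\vee}$, so $\Delta$ is vertex decomposable. If you want to salvage your route, the repair is the same move: replace ``the linear-quotients ordering descends to the minors'' by ``sequential Cohen--Macaulayness descends to the minors,'' i.e., induct on (i) and invoke \cite{HMS} for $I_{1}$ and $I_{2}$.
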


\begin{proof}
The implications $(iii)\Longrightarrow(ii)\Longrightarrow(i)$ always hold. Hence it suffices to prove $(i)\Longrightarrow(iii)$. Assume that
$\Delta$ is sequentially Cohen-Macaulay. We may assume that $I=x_1I_1+I_2$, where $I_2\subseteq I_1$ are matroidal ideals. By using the proof of \cite[Theorem 2.8]{HMS}, $I_1$ and $I_2$ are sequentially Cohen-Macaulay and so $I_1^{\vee}$ and $I_2^{\vee}$ have componentwise linear resolution. Since $I=(x_1,I_2)\cap I_1$, it follows that $I^{\vee}=x_1I_2^{\vee}+I_1^{\vee}$ and $I_1^{\vee}\subseteq I_2^{\vee}$. Now by induction hypothesis on $n$, $I_1^{\vee}$ and $I_2^{\vee}$ are vertex splittable. Hence $I^{\vee}$ is vertex splittable and so $\Delta$ is vertex decomposable, as required.
\end{proof}

\begin{Theorem}\cite[Theorem 2.5]{M}\label{T00}
If $\Delta$ is a simplicial complex such that $I=I_{\Delta}$ is weakly polymatroidal with respect to $x_1>x_2>\dots>x_n$, then $\Delta^{\vee}$ is vertex decomposable.
\end{Theorem}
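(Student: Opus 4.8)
The plan is to transfer the problem from $\Delta^{\vee}$ to the ideal $I$ itself by Alexander duality, and then to show directly that a square-free weakly polymatroidal ideal is vertex splittable, imitating the inductive scheme of Lemma~\ref{1}.

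First I would apply the criterion of Moradi and Khosh-Ahang \cite{MK}: a complex $\Gamma$ is vertex decomposable if and only if $I_{\Gamma^{\vee}}$ is vertex splittable. Taking $\Gamma=\Delta^{\vee}$ and using that Alexander duality is an involution, $(\Delta^{\vee})^{\vee}=\Delta$, this says that $\Delta^{\vee}$ is vertex decomposable if and only if $I_{(\Delta^{\vee})^{\vee}}=I_{\Delta}=I$ is vertex splittable. So the theorem reduces to the statement: \emph{if $I=I_{\Delta}$ is weakly polymatroidal with respect to $x_1>\cdots>x_n$, then $I$ is vertex splittable}. Note that $I$ is square-free, so every exponent is $0$ or $1$; this will be used repeatedly.

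I would prove this by induction on $n$, the case $n=1$ being trivial. Put $I_2=(u\in\g(I):x_1\nmid u)$ and $I_1=(u/x_1:u\in\g(I),\ x_1\mid u)$, both ideals of $K[x_2,\ldots,x_n]$. If no generator of $I$ involves $x_1$, then $I$ already lives in $K[x_2,\ldots,x_n]$ and is weakly polymatroidal there, so the inductive hypothesis applies. Otherwise I claim $I=x_1I_1+I_2$ is a vertex splitting in the sense of Definition~\ref{D1}(2); it has to be checked that (a) $\g(I)$ is the disjoint union of $\g(x_1I_1)$ and $\g(I_2)$, (b) $I_2\subseteq I_1$, and (c) $I_1$ and $I_2$ are weakly polymatroidal with respect to $x_2>\cdots>x_n$, so that they are vertex splittable by induction. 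Part (a) is a routine minimality check: since $I$ is square-free, distinct minimal generators divide neither one another, so the two families of generators stay minimal and disjoint.

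The main obstacles are (b) and (c), and they must be carried out in this order. For (b), given $v\in\g(I_2)$ (so $\deg_{x_1}v=0$) and any $u\in\g(I)$ with $\deg_{x_1}u=1$, weak polymatroidality applied at the first differing index $t=1$ yields $j>1$ with $x_1(v/x_j)\in I$. This monomial is divisible by $x_1$; writing it as a multiple of some $w\in\g(I)$, the key point is that $w$ must itself be divisible by $x_1$, for otherwise $w$ would divide the proper divisor $v/x_j$ of the minimal generator $v$, which is impossible. Hence $w/x_1\in\g(I_1)$ divides $v/x_j$, giving $v\in I_1$. For (c) the delicate case is $I_1$: two generators of $I_1$ lift to $u,v\in\g(I)$ with $\deg_{x_1}u=\deg_{x_1}v=1$ agreeing up to their first difference at some index $t\geq2$, and weak polymatroidality gives $j>t$ with $x_t(v/x_j)\in I$, still divisible by $x_1$. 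Dividing by $x_1$, one lands in $I_1$ by distinguishing whether the minimal generator $w$ dividing $x_t(v/x_j)$ contains $x_1$ (then directly, since $w/x_1\in\g(I_1)$) or not (then $w\in\g(I_2)\subseteq I_1$, invoking (b)). The ideal $I_2$ is handled more easily, since all the exchange monomials produced there avoid $x_1$ and hence stay in $I_2$ by the same minimality argument. With (a)--(c) in place, Definition~\ref{D1}(2) shows $I$ is vertex splittable, closing the induction and hence proving the theorem.
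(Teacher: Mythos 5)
Your proposal is correct, but it takes a genuinely different route from the paper, which in fact offers no argument of its own: Theorem \ref{T00} is quoted from \cite[Theorem 2.5]{M}, where Mohammadi's original proof is combinatorial, exhibiting $x_1$ as a shedding vertex of $\Delta^{\vee}$ and inducting on links and deletions (the notion of vertex splittable ideal did not yet exist in 2011). You instead invoke the Moradi--Khosh-Ahang equivalence \cite{MK} together with the involution $(\Delta^{\vee})^{\vee}=\Delta$ to reduce the theorem to the purely ideal-theoretic claim that a square-free weakly polymatroidal ideal is vertex splittable, and you prove that claim by induction on $n$ via the splitting $I=x_1I_1+I_2$, in the spirit of Lemma \ref{1}. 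Note that this reverses the logical order of the paper, which deduces precisely that claim as a consequence of the quoted theorem; you prove the claim first and obtain the theorem from it, which makes the treatment self-contained modulo \cite{MK}. Your key verifications are sound: the minimality argument forcing a minimal generator $w$ dividing $x_1(v/x_j)$ to be divisible by $x_1$, the resulting inclusion $I_2\subseteq I_1$, and the two-case check that $I_1$ inherits weak polymatroidality --- which genuinely requires $I_2\subseteq I_1$, so your insistence on doing (b) before (c) is correct; the argument also survives the degenerate case $I_2=(0)$, since there the subcase $x_1\nmid w$ never occurs. Only one step should be made explicit: when no generator of $I$ involves $x_1$, the inductive hypothesis gives vertex splittability of $I$ only as an ideal of $K[x_2,\ldots,x_n]$, and you need the routine lemma that extending a vertex splittable ideal to a larger polynomial ring preserves vertex splittability (a short induction on the splitting structure). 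This cannot be sidestepped by applying Definition \ref{D1}(2) with $x=x_1$, since taking $I_1=(0)$ there would violate the requirement $I_2\subseteq I_1$. With that easy lemma supplied, your proof is complete.
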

From Theorem \ref{T00} and Definition \ref{D1} we can conclude that if the square-free monomial ideal $I$ is weakly polymatroidal, then $I$ is vertex splittable.

\begin{Lemma}\label{L1}
Let $\Delta$ be a simplicial complex and let $I=I_{\Delta}$ be minimally generated by square-free monomials $u_1,u_2$. Then $\Delta$ is vertex decomposable.
\end{Lemma}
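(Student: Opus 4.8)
The plan is to use the Moradi--Khosh-Ahang criterion recalled above (\cite{MK}): a simplicial complex $\Delta$ is vertex decomposable if and only if $I^{\vee}$ is vertex splittable. So I would reduce everything to proving that $I^{\vee}$ is vertex splittable in the sense of Definition \ref{D1}. Write $S_i=\supp(u_i)$, so that $u_i=x_{S_i}$. Since $u_1,u_2$ are distinct elements of $\g(I)$, neither divides the other, hence $A:=S_1\setminus S_2$ and $B:=S_2\setminus S_1$ are both nonempty; put $T:=S_1\cap S_2$. Using the description of the Alexander dual recalled in the preliminaries, $I^{\vee}=\mathfrak{p}_{S_1}\cap\mathfrak{p}_{S_2}$, where $\mathfrak{p}_{S}=(x_j\mid j\in S)$, and a direct computation of this intersection of two monomial primes gives $\g(I^{\vee})=\{x_k\mid k\in T\}\cup\{x_ix_j\mid i\in A,\ j\in B\}$.

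Next I would peel off the linear generators one at a time, by induction on $\abs{T}$. If $T\neq\emptyset$, choose $k\in T$ and write $I^{\vee}=x_kI_1+I_2$ with $I_1=(1)$ and $I_2$ the ideal of $K[V\setminus\{x_k\}]$ generated by $\{x_{k'}\mid k'\in T\setminus\{k\}\}\cup\{x_ix_j\mid i\in A,\ j\in B\}$. Because $T,A,B$ are pairwise disjoint, the only generator of $I^{\vee}$ divisible by $x_k$ is $x_k$ itself, so $\g(x_kI_1)=\{x_k\}$, while $\g(I_2)$ consists of the remaining generators; thus $\g(I^{\vee})$ is the disjoint union of $\g(x_kI_1)$ and $\g(I_2)$, and $I_2\subseteq I_1=(1)$ trivially. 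This is a vertex splitting, and $I_2$ has the same shape with $T$ replaced by $T\setminus\{k\}$, so by induction it is vertex splittable; hence so is $I^{\vee}$.

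It then remains to treat the base case $T=\emptyset$, where $I^{\vee}=(x_ix_j\mid i\in A,\ j\in B)$ is the edge ideal of the complete bipartite graph on the parts $A$ and $B$. I would split at a vertex $x_i$ with $i\in A$: the generators divisible by $x_i$ are exactly $\{x_ix_j\mid j\in B\}$, so $I^{\vee}=x_i\,\mathfrak{p}_B+I''$ with $\mathfrak{p}_B=(x_j\mid j\in B)$ and $I''=(x_{i'}x_j\mid i'\in A\setminus\{i\},\ j\in B)$. One checks directly that $I''\subseteq\mathfrak{p}_B$, that $\g(I^{\vee})$ is the disjoint union of $\g(x_i\mathfrak{p}_B)$ and $\g(I'')$, and that $\mathfrak{p}_B$ and $I''$ both live in $K[V\setminus\{x_i\}]$. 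A monomial prime $\mathfrak{p}_B$ is itself vertex splittable (peel its variables one by one, ending at a principal ideal), and $I''$ is again a complete bipartite edge ideal on $(A\setminus\{i\},B)$, equal to $(0)$ when $A\setminus\{i\}=\emptyset$; so induction on $\abs{A}$ makes $I''$ vertex splittable. Hence $I^{\vee}$ is vertex splittable, and therefore $\Delta$ is vertex decomposable.

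The genuinely routine parts are the explicit evaluation of $\mathfrak{p}_{S_1}\cap\mathfrak{p}_{S_2}$ and the repeated verification of the three conditions of Definition \ref{D1} at each split. The only point needing a little care is the bookkeeping of the degenerate base cases $(0)$, $(v)$, and $R=(1)$, which are exactly the ideals declared vertex splittable in part (1) of Definition \ref{D1}; handling these cleanly is the main (mild) obstacle, and it is precisely what makes the dual formulation preferable to arguing about shedding vertices of $\Delta$ directly, where these degenerate situations would reappear as void links.
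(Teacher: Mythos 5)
Your proposal is correct, and it reaches the goal by a genuinely different route than the paper. Both arguments begin the same way: by the Moradi--Khosh-Ahang criterion, vertex decomposability of $\Delta$ is equivalent to vertex splittability of $I^{\vee}$, and $I^{\vee}=\mathfrak{p}_1\cap\mathfrak{p}_2$ with $\g(\mathfrak{p}_i)=\supp(u_i)$. At that point the paper simply cites \cite[Theorem 2.3]{MM} to conclude that an intersection of two monomial primes is (square-free) weakly polymatroidal, and then invokes the implication ``weakly polymatroidal $\Rightarrow$ vertex splittable'' (Theorem \ref{T00} together with Definition \ref{D1}). You instead compute $\g(I^{\vee})=\{x_k\mid k\in T\}\cup\{x_ix_j\mid i\in A,\ j\in B\}$ explicitly and verify Definition \ref{D1} by hand: first peeling off the linear generators indexed by $T=\supp(u_1)\cap\supp(u_2)$ via splittings of the form $x_k\cdot(1)+I_2$, then splitting the residual complete bipartite edge ideal at a vertex of $A$, with monomial primes and the ideals $(0)$, $(v)$, $R$ handled as base cases. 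All the splitting conditions you check (containment $I_2\subseteq I_1$, disjointness of generating sets, both pieces living in $K[V\setminus\{x\}]$) do hold, and your observation that $A$ and $B$ are nonempty because neither $u_i$ divides the other is the right justification for the bipartite base case. What the paper's route buys is brevity and consistency with its overall weakly-polymatroidal theme; what your route buys is self-containedness --- it needs neither \cite[Theorem 2.3]{MM} nor Theorem \ref{T00}, avoids any discussion of variable orderings (which weak polymatroidality depends on), and exhibits the actual chain of splitting vertices, which under the \cite{MK} duality corresponds to an explicit shedding sequence for $\Delta$.
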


\begin{proof}
It is clear that $I^{\vee}=\frak{p}_1\cap\frak{p}_2$ such that $\g(\frak{p_i})=\supp(u_i)$ for $i=1,2$. Therefore by \cite[Theorem 2.3]{MM} $I^{\vee}$ is square-free weakly polymatroidal and so $I^{\vee}$ is vertex splittable. Hence $\Delta$ is vertex decomposable, as required.
\end{proof}

Francisco and Van Tuyl \cite[Corollary 6.6]{FV} showed that if $\Delta$ is a simplicial complex with $I=I_{\Delta}$ is a minimally generated by square-free monomials $u_1,\ldots,u_s$, then $\Delta$ is  sequentially Cohen-Macaulay in the following cases $(i)$ if $s\leq 3$; $(ii)$ if $\supp(u_i)\cup\supp(u_j)=\{x_1,\ldots,x_n\}$ for all $i\neq j$. In the following we show that the above assertion is true for vertex decomposability.

\begin{Theorem}\label{T1}
Let $\Delta$ be a simplicial complex and let $I=I_{\Delta}$ be a minimally generated by square-free monomials $u_1,\ldots,u_s$. Then $\Delta$ is vertex decomposable in the following cases:
\begin{enumerate}
\item[(i)] if $s\leq 3$;
\item[(ii)]if $\supp(u_i)\cup\supp(u_j)=\{x_1,\ldots,x_n\}$ for all $i\neq j$.
\end{enumerate}

\end{Theorem}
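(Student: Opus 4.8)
The plan is to pass to the Alexander dual and use the equivalence of Moradi and Khosh-Ahang \cite{MK}: $\Delta$ is vertex decomposable if and only if $I^{\vee}$ is vertex splittable. First I would record the shape of $I^{\vee}$. Writing $A_i=\supp(u_i)$ and $\frak{p}_i=(A_i)$ for the associated monomial prime, Alexander duality gives $I^{\vee}=\frak{p}_1\cap\cdots\cap\frak{p}_s$, whose minimal generators $x_T$ are indexed by the minimal transversals $T$ of the family $\{A_1,\ldots,A_s\}$, i.e.\ the minimal sets meeting every $A_i$. I would then prove this transversal ideal is vertex splittable by induction on $n$, in each step exhibiting a vertex splitting $I^{\vee}=x\,I_1+I_2$ in the sense of Definition \ref{D1}, where $I_2=(x_T:x\notin T)$ collects the generators avoiding $x$ and $I_1=(x_{W\setminus\{x\}}:x\in W,\ W\text{ a minimal transversal})$ is built from those through $x$. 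Two elementary facts will be used repeatedly: any ideal generated by a subset of the variables (in particular each $\frak{p}_i$) is vertex splittable, and deleting $x$ from every support, i.e.\ replacing $A_i$ by $A_i\setminus\{x\}$, turns $I_2$ into the transversal ideal of $\{A_i\setminus\{x\}\}$, which again falls under hypothesis (i) or (ii) on the smaller variable set. Thus the induction closes as soon as each $I_1$ is vertex splittable and $I_2\subseteq I_1$.

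For case (ii) the splitting vertex is essentially forced. The complements $B_i=V\setminus A_i$ are pairwise disjoint, so every variable lies outside at most one $A_i$. Set $C=\bigcap_i A_i$. If $C\neq\emptyset$, I split on $x\in C$: here $\{x\}$ is itself a minimal transversal, so $1\in I_1$, $I_1$ is the unit ideal of $K[V\setminus\{x\}]$, and $I_2\subseteq I_1$ holds trivially. If $C=\emptyset$, then for $s\ge 2$ some $B_1\neq\emptyset$, and splitting on $x\in B_1\subseteq\bigcap_{i\neq 1}A_i$ yields $I_1=\frak{p}_1$, with the inclusion $I_2\subseteq I_1$ checked directly. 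Since deletion preserves the equality $\supp(u_i)\cup\supp(u_j)=\{x_1,\ldots,x_n\}$, the residual ideal $I_2$ again satisfies (ii), and induction applies.

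For case (i) with $s=3$ (the cases $s\le 2$ being the base step $\frak{p}_1$ and Lemma \ref{L1}) I would split according to the intersection pattern of $A_1,A_2,A_3$. If $C=A_1\cap A_2\cap A_3\neq\emptyset$, take $x\in C$ as above, so $I_1$ is the unit ideal. If $C=\emptyset$ but, say, $A_1\cap A_2\neq\emptyset$, take $x\in A_1\cap A_2$; the minimal transversals through $x$ are then exactly the pairs $\{x,t\}$ with $t\in A_3$, whence $I_1=\frak{p}_3$. Finally, if the $A_i$ are pairwise disjoint, take any $x\in A_1$; here $I_1=\frak{p}_2\cap\frak{p}_3$ is the two-support transversal ideal handled by Lemma \ref{L1}. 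In each sub-case I verify $I_2\subseteq I_1$ and that $I_2$ is again a transversal ideal of at most three supports in one fewer variable, so induction finishes.

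The main obstacle is precisely the choice of splitting vertex together with the verification $I_2\subseteq I_1$, an inclusion that can genuinely fail for a careless choice. For example, with $A_1=\{1,2\}$, $A_2=\{1,3\}$, $A_3=\{2,3,4\}$, splitting on the private vertex $4$ gives $I_1=(x_1)$ while $I_2\ni x_2x_3\notin I_1$, so no vertex decomposition results; one must instead split on a vertex of maximal coverage, such as $1\in A_1\cap A_2$. Hence the real content is the combinatorial claim that, under either hypothesis, a vertex lying in the largest number of the $A_i$ produces a genuine splitting in which $I_1$ collapses to a prime, a product of two primes, or the unit ideal, each manifestly vertex splittable. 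I expect (ii) to be the easy half, since disjointness of the $B_i$ makes the good vertex transparent, whereas in (i) the restriction $s\le 3$ is what keeps the transversal combinatorics small enough for the explicit case analysis to succeed.
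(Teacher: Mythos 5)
Your proposal is correct and takes essentially the same route as the paper's own proof: both pass to the Alexander dual $I^{\vee}=\frak{p}_1\cap\cdots\cap\frak{p}_s$ via the Moradi--Khosh-Ahang criterion and exhibit vertex splittings $I^{\vee}=xI_1+I_2$ by choosing the splitting vertex $x$ in a total or pairwise intersection of the supports (or, in case (ii), in the complement of a single support), identifying $I_1$ as the unit ideal, a prime, or an intersection of two primes, and recursing on the deleted family. The only cosmetic difference is that the paper settles the disjoint-support configurations by citing that such intersections are matroidal (Lemma \ref{1}) or weakly polymatroidal (\cite[Theorem 2.4]{MM}), whereas you resolve them by the same splitting recursion together with Lemma \ref{L1}.
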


\begin{proof} Consider the case $(i)$. It is enough to show that $I^{\vee}$ is a vertex splittable ideal. If $s=1$, then the result is clear. If $s=2$, then by Lemma \ref{L1} we get the result. Suppose that $s=3$ and $I=(u_1,u_2,u_3)$. Then $I^{\vee}=\frak{p}_1\cap\frak{p}_2\cap\frak{p}_3$ such that $\g(\frak{p_i})=\supp(u_i)$ for $i=1,2,3$. We consider the following cases:\\
{\bf Case 1}: if $\g(\frak{p}_{i})\cap\g(\frak{p}_{j})=\emptyset$ for $1\leq i\ne j\leq 3$, then $I^{\vee}$ is a matroidal ideal. Therefore by Lemma \ref{1},
$I^{\vee}$ is vertex splittable.\\
{\bf Case 2}: if $\g(\frak{p}_{i_1})\cap G(\frak{p}_{i_j})=\emptyset$ for $j=2,3$ and $\{i_1,i_2,i_3\}=\{1,2,3\}$, then by \cite[Theorem 2.4]{MM} $I^{\vee}$ is weakly polymatroidal and so it is vertex splittable.\\
{\bf Case 3}: if $\g(\frak{p}_{i_1})\cap \g(\frak{p}_{i_2})\ne\emptyset$ and $\g(\frak{p}_{i_1})\cap\g(\frak{p}_{i_3})=\emptyset$, then $\g(\frak{p}_{i_2})\cap \g(\frak{p}_{i_3})\ne\emptyset$ on the otherwise we come back to the Case 2. Therefore we may consider  $I^{\vee}=x\frak{p}_{i_3}+\frak{q}_{i_1}\cap\frak{q}_{i_2}\cap\frak{p}_{i_3}$
 for some $x\in\g(\frak{p}_{i_1})\cap\g(\frak{p}_{i_2})$, $\{i_1,i_2,i_3\}=\{1,2,3\}$ and $\frak{p}_{i_j}=(x,\frak{q}_{i_j})$ for $j=1,2$. Hence $I^{\vee}$ is vertex splittable if and only if
$\frak{q}_{i_1}\cap\frak{q}_{i_2}\cap\frak{p}_{i_3}$ is vertex splittable. By continuing this fashion on $\frak{q}_{i_1}\cap\frak{q}_{i_2}\cap\frak{p}_{i_3}$ and using the Cases 1, 2 we obtain that $I^{\vee}$ is vertex splittable.\\
{\bf Case 4}: if $\g(\frak{p}_{1})\cap\g(\frak{p}_{2})\cap\g(\frak{p}_{3})=\emptyset$, then either $\g(\frak{p}_{i})\cap \g(\frak{p}_{j})\ne\emptyset$ for all $1\leq i\ne j\leq 3$ or we have one of the Cases 1,2,3. Thus it enough to consider $\g(\frak{p}_{i})\cap \g(\frak{p}_{j})\ne\emptyset$ for all $1\leq i\ne j\leq 3$. Suppose $x\in\g(\frak{p}_{1})\cap\g(\frak{p}_{2})$. Hence $I^{\vee}=x\frak{p}_{3}+\frak{q}_{1}\cap\frak{q}_{2}\cap\frak{p}_{3}$ and $\frak{p}_i=(x,\frak{q}_i)$ for $i=1,2$. Thus $I^{\vee}$ is vertex splittable if and only if $\frak{q}_{1}\cap\frak{q}_{2}\cap\frak{p}_{3}$ is vertex splittable. Therefore by continuing this argument we get to one of the above cases and so the result follows.\\
{\bf Case 5}: if $\g(\frak{p}_{1})\cap\g(\frak{p}_{2})\cap\g(\frak{p}_{3})\ne\emptyset$, then
 $I^{\vee}=xR+\frak{q}_{1}\cap\frak{q}_{2}\cap\frak{q}_{3}$ for some $x\in\g(\frak{p}_{1})\cap\g(\frak{p}_{2})\cap\g(\frak{p}_{3})$ and $\frak{p}_i=(x,\frak{q}_i)$ for $i=1,2,3$. Hence $I^{\vee}$ is vertex splittable if and only if
 $\frak{q}_{1}\cap\frak{q}_{2}\cap\frak{q}_{3}$ is vertex splittable. By continuing this fashion on $\frak{q}_{1}\cap\frak{q}_{2}\cap\frak{q}_{3}$ and using the above cases we get that $I^{\vee}$ is vertex splittable. This completes the proof of case $(i)$.

 Now, we consider the case $(ii)$ and we prove that $I^{\vee}$ is a vertex splittable ideal.
 we can assume $I^{\vee}=\frak{p}_1\cap\dots\cap\frak{p}_s$. If $x\in\cap_{i=1}^s\g(\frak{p}_i)$, then we have $I^{\vee}=xR+\frak{q}_1\cap\dots\cap\frak{q}_s$ where $\frak{q}_i$'s are monomial prime ideals such that $\frak{p}_i=(x,\frak{q}_i)$ for all $i=1,\ldots,s$. Therefore $I^{\vee}$ is vertex splittable if and only if $\frak{q}_1\cap\dots\cap\frak{q}_s$ is vertex splittable. By continuing this arguments we may assume that $\cap_{i=1}^s\g(\frak{p}_i)=\emptyset$.
 If $\g(\frak{p}_i)\cap \g(\frak{p}_j)=\emptyset$ for all $1\leq i\ne j\leq s$, then $I^{\vee}$ is a matroidal ideal and so by Lemma \ref{1}, we get $I^{\vee}$ vertex splittable. Now, let $x\notin\g(\frak{p}_j)$ for some $j$. By assumption $\g(\frak{p}_i)\cup \g(\frak{p}_j)=\{x_1,\ldots,x_n\}$ for all $i\neq j$, it follows that $x\in \g(\frak{p}_i)$ for all $i\ne j$. Therefore we get $I^{\vee}=x\frak{p}_j+\frak{q}_1\cap\dots\cap\frak{q}_{j-1}\cap\frak{p}_j\cap\frak{q}_{j+1}\cap\dots\cap\frak{q}_s$, where $\frak{q}_i$'s are monomial prime ideals with $\frak{p}_i=(x,\frak{q}_i)$ for all $i\ne j$, $\g(\frak{q}_r)\cup \g(\frak{q}_t)=\{x_1,\ldots,x_n\}\setminus\{x\}$ for all $r\ne t\in\{1,\ldots,s\}\setminus\{j\}$ and $\g(\frak{q}_i)\cup \g(\frak{p}_j)=\{x_1,\ldots,x_n\}\setminus\{x\}$ for all $i\ne j$. Hence $I^{\vee}$ is vertex splittable if and only if $\frak{q}_1\cap\dots\cap\frak{q}_{j-1}\cap\frak{p}_j\cap\frak{q}_{j+1}\cap\dots\cap\frak{q}_s$ is vertex splittable. Since 
 $\g(\frak{q}_r)\cup \g(\frak{q}_t)=\{x_1,\ldots,x_n\}\setminus\{x\}$ for all $r\ne t\in\{1,\ldots,s\}\setminus\{j\}$ and 
 $\g(\frak{q}_i)\cup \g(\frak{p}_j)=\{x_1,\ldots,x_n\}\setminus\{x\}$ for all $i\ne j$, by induction hypothesis on $n$, we get the result, as required.
\end{proof}

The following example shows that Theorem \ref{T1}, in general, does not hold for a monomial ideal minimally generated by $4$ monomial elements.
\begin{Example}
Let $I=(x_1x_2,x_2x_3,x_3x_4,x_1x_4)$ be a monomial ideal of $R=k[x_1,x_2,x_3,x_4]$. Then $I^{\vee}=(x_1x_3,x_2x_4)$ and so $\reg(I^{\vee})=3$. Thus $I$ is not sequentially Cohen-Macaulay and so it is not vertex decomposable.
\end{Example}

A graph $G$ is called chordal if each cycle of length$>3$ has a chord.
\begin{Proposition}\label{L2}
Let $I$ be a square-free monomial ideal of $R$ with $\deg(I)\leq 2$. Then the following conditions are equivalent:
\begin{enumerate}
\item[(i)] $I$ is (up to a relabeling of the variables of $R$ if necessarily) weakly polymatroidal;
\item[(ii)] $I$ is vertex splittable;
\item[(iii)] $I$ has linear quotients.
\end{enumerate}
\end{Proposition}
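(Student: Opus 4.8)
The implications $(i)\Rightarrow(ii)\Rightarrow(iii)$ are already in hand: $(i)\Rightarrow(ii)$ is the remark following Theorem \ref{T00} together with Definition \ref{D1} (a relabeling of the variables does not affect vertex splittability), and $(ii)\Rightarrow(iii)$ is the implication vertex splittable $\Rightarrow$ linear quotients recalled above. So the entire content is the implication $(iii)\Rightarrow(i)$, and the plan is to produce an explicit relabeling of the variables with respect to which $I$ becomes weakly polymatroidal. Since $I$ is square-free with $\deg(I)\le 2$, I split $\g(I)$ into the set $W$ of its degree-one generators and the set of its degree-two generators; the latter are the edges of a graph $G$, and no variable of $W$ lies on an edge of $G$ (otherwise such an edge would not be a minimal generator).

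The first step is to extract chordality from the hypothesis. Assuming $(iii)$, $I$ has linear quotients, hence is componentwise linear by Francisco--Van Tuyl \cite[Proposition 2.6]{FV}, hence $I_{[2]}$ has a linear resolution by Herzog--Hibi \cite[Proposition 1.5]{HH1}. Now $I_{[2]}$ is the edge ideal of the graph $H$ obtained from $G$ by adjoining, for each $w\in W$, all edges $\{w,y\}$; so by Fröberg's theorem the complement $\bar{H}$ is chordal, and since $\bar{G}$ is the induced subgraph of $\bar{H}$ on the vertices of $G$, the graph $\bar{G}$ is chordal as well.

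Because $\bar{G}$ is chordal it admits a perfect elimination ordering. I then relabel the variables, using the convention $x_1>\cdots>x_n$, so that the variables in $W$ receive the smallest indices (in any order), the vertices of $G$ receive the next indices listed along a perfect elimination ordering of $\bar{G}$, and the remaining irrelevant variables come last. This makes the degree-one generators the largest variables and arranges the edge variables so that, for every index $t$ of a vertex of $G$, the vertex $t$ is simplicial in the subgraph of $\bar{G}$ induced on the vertices of index $\ge t$.

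It remains to verify that $I$ is weakly polymatroidal for this order, which is the heart of the matter. Given generators $u,v$, let $t$ be the first index where their exponent vectors differ, with $\deg_{x_t}(u)>\deg_{x_t}(v)$; I must find $j>t$ with $x_t(v/x_j)\in I$. If $x_t\in W$ then $x_t$ is itself a generator, so $x_t(v/x_j)\in I$ for any admissible $j$ and the condition is automatic; this is exactly why the elements of $W$ are placed first, and it disposes of every pair whose distinguished variable lies in $W$. The substantial case is $u=x_tx_p$ and $v=x_cx_d$, both edges of $G$. When $u$ and $v$ share the endpoint $p$, taking $j$ to be the other endpoint of $v$ gives $x_t(v/x_j)=x_tx_p=u\in I$. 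When $u$ and $v$ are disjoint the requirement reduces to $t$ being adjacent in $G$ to an endpoint of $v$, i.e.\ to the non-neighbors of $t$ among the vertices of index $\ge t$ forming an independent set of $G$ --- precisely the simpliciality of $t$ in $\bar{G}$ restricted to those vertices, which is guaranteed by the chosen elimination ordering. Hence the weakly polymatroidal condition holds and $(iii)\Rightarrow(i)$ follows. I expect this disjoint-edge case to be the only real obstacle, since it is where the combinatorics forced by linear quotients (chordality of $\bar{G}$) must be matched exactly against the first-difference condition in the definition of weakly polymatroidal ideals; the degree-one bookkeeping and the common-endpoint case are routine. (One can also avoid invoking Fröberg's theorem and instead argue the ``only if'' direction directly, showing that a non-chordal $\bar{G}$ produces an induced cycle forcing $\reg(I_{[2]})\ge 3$ and hence the failure of a linear resolution, or run the passage to the elimination ordering as an induction on $n$.)
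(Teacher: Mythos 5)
Your proof is correct, and its skeleton is the same as the paper's: both split off the degree-one generators, reduce to the degree-two part viewed as an edge ideal, invoke Fr\"oberg's theorem \cite[Theorem 6]{F} to get chordality of the complement graph, and conclude weak polymatroidality from a perfect elimination ordering. The differences lie in how the two black boxes are handled. To extract chordality, the paper writes $I=(x_1,\ldots,x_t,J)$ and asserts that $J$ ``immediately'' inherits linear quotients from $I$, then applies \cite[Theorem 3.2]{HHZ} and Fr\"oberg to $J$; you instead pass through componentwise linearity (\cite[Proposition 2.6]{FV}, \cite[Proposition 1.5]{HH1}) and the ideal $I_{[2]}$, which costs you the auxiliary graph $H$ and the restriction-to-induced-subgraph step, but avoids having to justify that a linear-quotients ordering of $I$ restricts to one of $J$ --- your route is, if anything, the more airtight of the two. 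For the final step, the paper simply cites Mohammadi \cite[Theorem 2.2]{M} (the edge ideal of a graph with chordal complement is weakly polymatroidal after relabeling along a perfect elimination ordering) and then says ``by definition'' the degree-one generators can be adjoined; you re-prove that theorem inline via the shared-endpoint and disjoint-edge cases, and you make explicit the bookkeeping the paper leaves implicit, namely that the degree-one variables must be placed \emph{largest} in the order for their adjunction to be harmless. Your case analysis is sound; the only detail worth adding is in the shared-endpoint case, where the admissibility $j>t$ of the other endpoint $j$ of $v$ should be noted to follow automatically (if $j<t$ the first difference of $u$ and $v$ would occur at $j$, not $t$). So your argument is self-contained where the paper leans on citations, at the price of length; both are valid.
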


\begin{proof}
$(i)\Longrightarrow(ii)$. Let $I=(x_{i_1},\ldots,x_{i_r},u_1,\ldots,u_t)$, where $x_{i_1},\ldots,x_{i_r}$ are variables and $u_1,\ldots,u_t$ are square-free monomial elements of degree $2$. Since $I$ is weakly polymatroidal, by \cite[Theorem 1.3]{MM} we deduce that $I$ has a linear quotients. Also, it is clear that $I$ has a linear quotients if and only if  $J=(u_1,\ldots,u_t)$ has a linear quotients and as well as $I$ is vertex splittable if and only if  $J=(u_1,\ldots,u_t)$ is vertex splittable.
Since $J$ has a linear quotients, by applying \cite[Corollary 3.6]{RY} we conclude that  $J$ is weakly polymatroidal. Now, by using Theorem \ref{T00} we deduce that $J$ is vertex splittable and so is $I$.\\
$(ii)\Longrightarrow(iii)$ is known as before.\\
Now, we prove $(iii)\Longrightarrow(i)$. After relabeling the variables of $R$, we may assume $I=(x_1,\ldots,x_t,J)$ such that
$J$ is a square-free monomial ideal of single degree $2$ and $\supp(J)\cap\{x_1,\ldots,x_t\}=\emptyset$. Since $I$ has linear quotients, it immediately follows that $J$ has linear quotients.
Hence by \cite[Theorem 3.2]{HHZ} $J$ has a linear resolution. If, we consider $J$ as the edge ideal of a graph $G$, then by \cite[Theorem 6]{F}
$\overline{G}$ is chordal and so by \cite[Theorem 2.2]{M} $J$ is weakly polymatroidal. By definition, $I$ is weakly polymatroidal, as required.
\end{proof}

\begin{Lemma}\cite[Lemma 2.4]{PFY}\label{L3}
Let $I$ be a weakly polymatroidal ideal of $R$ which is generated in a single degree. Then $(I:x_1)$ satisfies in the same property.
\end{Lemma}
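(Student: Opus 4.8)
The plan is to pin down the minimal generators of $J := (I : x_1)$ explicitly and then check the weakly polymatroidal exchange condition directly on them, lifting every required exchange back up to $I$. Write $\g(I) = A \cup B$ with $A = \{u \in \g(I) : x_1 \mid u\}$ and $B = \{u \in \g(I) : x_1 \nmid u\}$, and say $I$ is generated in degree $d$ (with respect to the order $x_1 > \cdots > x_n$). If $A = \emptyset$ then $x_1$ divides no generator, so $(I:x_1) = I$ and there is nothing to prove; hence I may assume $A \neq \emptyset$. A routine check of the colon operation shows $J$ is generated by $\{u/x_1 : u \in A\} \cup B$, where the first set consists of monomials of degree $d-1$ and $B$ of monomials of degree $d$.

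The hard part, and the step on which everything hinges, will be showing that every element of $B$ is redundant, so that in fact $\g(J) = \{u/x_1 : u \in A\}$. This is exactly where I would use weak polymatroidality together with the single-degree hypothesis. Fix $b \in B$ and any $u \in A$; since $\deg_{x_1}(u) \geq 1 > 0 = \deg_{x_1}(b)$, the generators $u, b \in \g(I)$ first differ in the variable $x_1$, so the exchange property of $I$ produces some $j > 1$ with $x_1(b/x_j) \in I$. Because $I$ is generated in the single degree $d$ and $x_1(b/x_j)$ again has degree $d$, this element must itself be a minimal generator of $I$; being divisible by $x_1$ it lies in $A$, say $u' = x_1(b/x_j)$, and then $u'/x_1 = b/x_j$ divides $b$. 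Thus $b$ is a multiple of a degree $(d-1)$ generator of $J$, hence redundant. Consequently $\g(J) = \{u/x_1 : u \in A\}$, a set of distinct monomials all of degree $d-1$; in particular $J$ is again generated in a single degree.

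Finally I would verify the exchange property for $J$. Take $p = u_p/x_1$ and $q = u_q/x_1$ in $\g(J)$ (with $u_p, u_q \in A$) agreeing in $x_1,\ldots,x_{t-1}$ and with $\deg_{x_t}(p) > \deg_{x_t}(q)$. Since passing from $u_p, u_q$ to $p, q$ lowers the $x_1$-exponent of each by exactly one, the generators $u_p$ and $u_q$ have the same first difference, at position $t$, with $\deg_{x_t}(u_p) > \deg_{x_t}(u_q)$. Applying the exchange property of $I$ to $u_p, u_q$ yields $j > t$ with $x_t(u_q/x_j) \in I$; since $j \geq 2$ we have $x_j \mid q$ and $x_t(u_q/x_j) = x_1 \cdot x_t(q/x_j)$, so $x_t(q/x_j) \in (I : x_1) = J$, which is precisely the exchange required for $p, q$. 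The only genuinely delicate point is the redundancy argument of the middle paragraph, where both weak polymatroidality and generation in a single degree are indispensable; the rest is bookkeeping about how the colon shifts the $x_1$-exponents.
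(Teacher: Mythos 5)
Your proof is correct. One structural remark: the paper itself offers no argument for this lemma---it is stated with a citation to \cite[Lemma 2.4]{PFY}---so there is no internal proof to compare against; your write-up supplies a complete, self-contained proof, and it is essentially the argument of the cited source. The hypotheses enter exactly where you place them: the single-degree assumption is what upgrades $x_1(b/x_j)\in I$ to $x_1(b/x_j)\in\g(I)$, so that every generator of $(I:x_1)$ not divisible by $x_1$ becomes redundant and $\g(I:x_1)=\{u/x_1 : u\in A\}$; and the fact that the colon is taken with respect to the \emph{greatest} variable $x_1$ is used twice---first to trigger the exchange at position $t=1$ against any $b\in B$, and second because $j>t\geq 1$ forces $x_j\neq x_1$, so that exchanges in $I$ descend to exchanges in $(I:x_1)$. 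This second point is precisely what breaks when one colons by $x_2$ instead, consistent with the paper's Example \ref{E1}. Minor quibbles only: you should state explicitly that $\{u/x_1: u\in A\}$ is an antichain under divisibility (immediate from minimality of $\g(I)$), so it really is the minimal generating set; and the definition of weakly polymatroidal tacitly requires $x_j\mid v$, which is what licenses your conclusion that $x_j\mid q$ follows from $x_j\mid u_q$ and $j\geq 2$.
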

The above lemma was ordering by $x_1>x_2>\dots>x_n$. In the following example we show that if we use $x_2$ instead of $x_1$, then the result of Lemma \ref{L3} doest not hold.

\begin{Example}\label{E1} Let $I=(x_1x_3,x_1x_4,x_1x_6,x_2x_3,x_2x_4,x_3x_5,x_4x_5,x_4x_6,x_5x_6)$
be an ideal of $R=K[x_1,\ldots,x_6]$. Then $\reg(I)=2$ and so $I$ has a linear resolution and so has linear quotients. In particular, by Proposition \ref{L2}  it is weakly polymatroidal.
But $(I:x_2)=(x_3,x_4,x_1x_6,x_5x_6)$ does not have linear quotients, and so it is not weakly polymatroidal.
\end{Example}

\begin{Theorem}\label{P2}
Let $I$ be a monomial ideal of single degree $2$. Then the following conditions are equivalent:
\begin{enumerate}
\item[(i)] $I$ is (up to a relabeling of the variables of $R$ if necessarily) weakly polymatroidal;
\item[(ii)]  $I$ is vertex splittable;
\item[(iii)] $I$ has linear quotients;
\item[(iv)] $I$ has a linear resolution.
\end{enumerate}
\end{Theorem}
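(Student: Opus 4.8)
The plan is to establish the equivalences by reducing the new, non-square-free content to the square-free Proposition~\ref{L2}. Three of the links are immediate from results recorded above: $(i)\Rightarrow(iii)$ because weakly polymatroidal ideals have linear quotients \cite{KH}, $(ii)\Rightarrow(iii)$ because vertex splittable ideals have linear quotients \cite{MK}, and $(iii)\Rightarrow(iv)$ because an ideal generated in a single degree with linear quotients has a linear resolution \cite{CH}. For the return $(iv)\Rightarrow(iii)$ I would invoke the theorem of Herzog, Hibi and Zheng \cite{HHZ}, valid for ideals generated in degree $2$, that a linear resolution is equivalent to linear quotients. After these steps the whole statement reduces to recovering $(i)$ and $(ii)$ from the linear quotients hypothesis $(iii)$.

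To do this I would pass to the square-free setting by polarization. Let $\widetilde I$ denote the polarization of $I$, obtained by replacing each pure square $x_i^2$ occurring among the generators by $x_ix_i'$ for a fresh variable $x_i'$; then $\widetilde I$ is square-free and generated in degree $2$. Polarization preserves the graded Betti numbers and the linear quotients property, so $\widetilde I$ again has linear quotients, and Proposition~\ref{L2} applies to $\widetilde I$: after a relabeling of the variables, $\widetilde I$ is weakly polymatroidal and vertex splittable. The remaining task is to transport these two properties back to $I$ along the specialization $x_i'\mapsto x_i$.

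This depolarization is where I expect the real difficulty, and the source of it is precisely the pure squares. On the one hand, a general specialization of a weakly polymatroidal ideal need not remain weakly polymatroidal, so one must show that the relabeling furnished by Proposition~\ref{L2} (which ultimately rests on a chordality, hence perfect-elimination, ordering through \cite{F} and \cite{M}) can be chosen so that each duplicated pair $x_i,x_i'$ is consecutive; collapsing such an order then preserves the exchange condition and yields $(i)$ for $I$. On the other hand, a square $x_i^2\in\g(I)$ prevents $x_i$ from serving as a splitting vertex in the sense of Definition~\ref{D1}, so the shedding vertex produced for $\widetilde I$ need not depolarize to one for $I$; the smallest instance $I=(x_1^2,x_1x_2)$, which must be split on $x_2$ rather than on $x_1$, already exhibits this. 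Controlling the interaction of the squares with both the exchange property and the choice of splitting vertex --- equivalently, showing that the structure found for $\widetilde I$ can always be realized compatibly with the merging $x_i'\mapsto x_i$ --- is the crux of the argument.
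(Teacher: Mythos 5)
Your reduction of the routine implications is fine and matches the paper: $(i)\Rightarrow(iii)$, $(ii)\Rightarrow(iii)$, $(iii)\Rightarrow(iv)$ come from \cite{KH}, \cite{MK}, \cite{CH}, and $(iv)\Rightarrow(iii)$ from \cite[Theorem 3.2]{HHZ}. But after that the proposal stops being a proof: the two implications that carry all of the content of the theorem --- recovering $(i)$ and $(ii)$ --- are precisely the ones you defer, and you end by declaring the depolarization step ``the crux of the argument'' without resolving it. That is a genuine gap: nothing in the proposal shows that $I$ itself (as opposed to its polarization $\widetilde I$) is weakly polymatroidal or vertex splittable, and you correctly explain why the naive transport along $x_i'\mapsto x_i$ fails for both properties. (A smaller point: you do not need the unproved claim that polarization preserves linear quotients; one can pass through linear resolutions, which polarization certainly preserves, and then use \cite[Theorem 3.2]{HHZ} in degree $2$ to regain linear quotients for $\widetilde I$.)

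For comparison, here is how the paper closes both gaps. For $(i)\Rightarrow(ii)$ it never depolarizes at all: it argues by induction on the number of variables, writing $I=x_1I_1+I_2$ with $I_1=(I:x_1)$ and $I_2$ generated by the generators of $I$ not divisible by $x_1$; Lemma \ref{L3} guarantees that $I_1$ is again weakly polymatroidal (and in the degree-$2$ situation it is generated by variables, hence vertex splittable), while $I_2$ is weakly polymatroidal by the definition, so induction applies. Thus your second difficulty --- that a square $x_i^2\in\g(I)$ obstructs $x_i$ as a splitting vertex --- is sidestepped rather than solved; it is, incidentally, a well-founded worry even against the paper, since when $x_1^2\in\g(I)$ the ideal $(I:x_1)$ contains $x_1$, so the splitting $I=x_1(I:x_1)+I_2$ only fits Definition \ref{D1} under a lenient reading of the requirement that $I_1$ live in $K[V\setminus\{x_1\}]$. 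Polarization enters the paper only in $(iv)\Rightarrow(i)$, and there only the weakly polymatroidal property must be transported back: writing $I=(x_1^2,\dots,x_t^2,J)$ and $L=(x_1y_1,\dots,x_ty_t,J)$, the paper uses Proposition \ref{L2} to make $L$ weakly polymatroidal, deduces from this that $x_ix_j\in J$ for all $1\le i<j\le t$, and then concludes via \cite[Proposition 2.3]{HHZ} applied to $J$ together with the definition. These two concrete arguments --- the colon-ideal induction and the forced presence of the mixed products $x_ix_j$ --- are exactly what your proposal is missing.
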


\begin{proof}
The implications $(i)\Longrightarrow(iii)\Longrightarrow(iv)$ and $(ii)\Longrightarrow (iii)$ are known by \cite[Theorem 1.4]{KH}, \cite[Theorem 3.2]{HHZ} and \cite[Theorem 2.4]{MK}.
Consider $(i)\Longrightarrow(ii)$ by Lemma \ref{L3}, $I_1=(I:x_1)$ is weakly polymatroidal of degree $1$ and so $I_1$ is
 vertex splittable. Now, we may consider $I=x_1I_1+I_2$ such that $I_2\subseteq I_1$ and by definition $I_2$ is weakly polymatroidal. Therefore by induction on the number of variables $I_2$ is vertex splittable.
 Thus $I$ is vertex splittable.

 It remains to prove $(iv)\Longrightarrow (i)$. We may assume $I=(x_1^2,\ldots,x_t^2,J)$ such that
$J$ is a square-free monomial ideal of single degree $2$. Polarizing the ideal $I$ yields the ideal $L=(x_1y_1,\ldots,x_ty_t,J)$ in $R=K[x_1,\ldots,x_n,y_1,\ldots,y_t]$. Since $I$ has a linear resolution, it implies that $L$ has a linear resolution and so it has linear quotients by \cite[Theorem 3.2]{HHZ}. Hence by Proposition \ref{L2}, $L$ is weakly polymatroidal and hence $x_ix_j\in J$ for all $1\leq i<j\leq t$. So $I$ is weakly polymatroidal.
\end{proof}

From the definition of weakly polymatroidal ideals, it is enough to exists an ordering such that the monomial ideal satisfy in the condition of definition. The following example is not weakly polymatroidal with ordering $x_1>x_2>x_3>x_4$ but by a new rebeling $x_3>x_1>x_2>x_4$, we obtain that $I$ is weakly polymatroidal. Hence the monomial ideal is not weakly polymatroidal when it does not satisfy the condition of definition by any ordering.

\begin{Example}
Let $I=(x_1 x_2, x_2 x_3, x_3 x_4)$ be an ideal of $R=K[x_1, x_2, x_3, x_4]$. By Macaulay2, $\reg(I)=2$ and so $I$ has a linear resolution. Hence by Proposition \ref{P2}, $I$ is weakly polymatroidal up to new relabeling of the variables of $R$ if necessarily. If we consider the ordering $x_1>x_2>x_3>x_4$ and the two elements $x_1 x_2$, $x_3 x_4$, then neither $x_1 (\frac{x_3 x_4}{x_3})$ nor $x_1 (\frac{x_3 x_4}{x_4})$ belongs to $I$. Hence by this ordering $I$ is not weakly polymatroidal. However by this argument we can not conclude that $I$ is not weakly polymatroidal.
\end{Example}

By the following example we conclude, in general, that Theorem \ref{P2} does not hold for monomial ideals of single degree bigger than $2$ and
also we show that the converse of Theorem \ref{T00} is not true, in general.
\begin{Example}\label{E2} Let $R=K[x_1,\ldots,x_6]$ and
\[I=(x_1x_2x_3,x_1x_2x_4,x_1x_2x_5,x_1x_2x_6,x_1x_4x_5,x_1x_5x_6,x_2x_3x_4,x_3x_4x_5,x_3x_4x_6,x_3x_5x_6,x_4x_5x_6)\]
be an ideal of $R$. Then
\[I^{\vee}=(x_1x_3x_4,x_1x_3x_5,x_1x_3x_6,x_1x_4x_5,x_1x_4x_6,x_2x_3x_5,x_2x_4x_5,x_2x_4x_6,x_2x_5x_6,x_3x_4x_5x_6).\]
Set $I^{\vee}=x_4I_1+I_2$, where\\
$I_1=(x_1x_3,x_1x_5,x_1x_6,x_2x_5,x_2x_6,x_3x_5x_6)$ and $I_2=(x_1x_3x_5,x_1x_3x_6,x_2x_3x_5,x_2x_5x_6)$.
 Therefore, it easily conclude that $I_2\subseteq I_1$ and $I_1,I_2$ are vertex splittable and so is $I^{\vee}$
 Now, we show that $I^{\vee}$ is not weakly polymatroidal. To do this,  we may consider the following cases:\\ (1) if $x_1>x_i$ for all $i\ne 1$ or $x_2>x_i$ for all $i\ne 2$, then we compare the two elements $u=x_1x_3x_4$ and $v=x_2x_5x_6$;\\ $(2)$ if $x_3>x_i$ for all $i\ne 3$, then we compare the two elements $u=x_2x_3x_5$ and $v=x_2x_4x_6$;\\ $(3)$ if $x_5>x_i$ for all $i\ne 5$, then we compare the two elements $u=x_2x_5x_6$ and $v=x_1x_3x_6$;
 $(4)$ if $x_6>x_i$ for all $i\ne 6$, then we compare the two elements $u=x_2x_5x_6$ and $v=x_1x_4x_5$;
 $(5)$ if $x_4>x_1>x_i$ for all $i\ne 1,4$, then we compare the two elements $u=x_1x_3x_6$ and $v=x_2x_5x_6$;\\
 $(6)$ if $x_4>x_2>x_i$ for all $i\ne 2,4$, then we compare the two elements $u=x_2x_4x_5$ and $v=x_1x_3x_4$;\\
 $(7)$ if $x_4>x_3>x_i$ for all $i\ne 3,4$, then we compare the two elements $u=x_1x_3x_4$ and $v=x_2x_4x_6$;\\
 $(8)$ if $x_4>x_5>x_i$ for all $i\ne 4,5$, then we compare the two elements $u=x_3x_4x_5x_6$ and $v=x_2x_3x_5$;\\
 $(9)$ if $x_4>x_6>x_i$ for all $i\ne 4,6$, then we compare the two elements $u=x_3x_4x_5x_6$ and $v=x_2x_3x_5$.\\
 Hence by using the above arguments, we immediately obtain that $I^{\vee}$ is not weakly polymatroidal.

\end{Example}

\subsection*{Acknowledgements}
The authors would like to express their deep gratitude to anonymous referees for their valuable comments which substantially improved the quality of the paper. The work of the second author has been supported financially by Vice-Chancellorship of Research and Technology, University of Kurdistan under research Project No. 99/11/19299.

%%%%%%%%%%%%%%%%%%%%%%%%%%%%%%%%%%%%%%%%%%%%%%%%%%%%%%%%%%%%%%%%%%%%%%%%%%%%%%%%%%%%%%%%%%%%%%%%%%%%%%%%%%%%%%%%%%%%%%%%%%%%

\end{document}